\documentclass[12pt]{amsart}
\usepackage{a4wide,enumerate,xcolor}
\usepackage{amsmath,graphicx,comment}
\allowdisplaybreaks

\let\pa\partial
\let\na\nabla
\let\eps\varepsilon
\newcommand{\N}{{\mathbb N}}
\newcommand{\R}{{\mathbb R}}

\newcommand{\dd}{{\rm d}}

\newtheorem{theorem}{Theorem}
\newtheorem{lemma}[theorem]{Lemma}
\newtheorem{proposition}[theorem]{Proposition}

%%%%%%%%%%%%%%%%%%%%%%%%%%%%%%%%%%%%%%%%%%%%%%%%%%%%%%%%%%%%%%%%%%%%%

\begin{document}

\title[Chemotaxis guidance of random walkers]{Chemotaxis guidance of random walkers \\ modeling self-wiring of neural networks} 

\author[N. Geltner]{Noah Geltner}
\address{Institute of Analysis and Scientific Computing, TU Wien, Wiedner Hauptstra\ss e 8--10, 1040 Wien, Austria}
\email{noah.geltner@tuwien.ac.at} 

\author[A. J\"ungel]{Ansgar J\"ungel}
\address{Institute of Analysis and Scientific Computing, TU Wien, Wiedner Hauptstra\ss e 8--10, 1040 Wien, Austria}
\email{juengel@tuwien.ac.at} 

\date{\today}

\thanks{The authors acknowledge partial support from   
the Austrian Science Fund (FWF), grant 10.55776/F65, and from the Austrian Federal Ministry for Women, Science and Research and implemented by \"OAD, project MultHeFlo. This work has received funding from the European Research Council (ERC) under the European Union's Horizon 2020 research and innovation programme, ERC Advanced Grant NEUROMORPH, no.~101018153. For open-access purposes, the authors have applied a CC BY public copyright license to any author-accepted manuscript version arising from this submission.} 

\begin{abstract}
A stochastic walker model is proposed to describe the chemotactic guidance of growth cones, i.e.\ the tips of developing neurites. The model accounts for the influence of both attractive and repulsive chemical cues, which are emitted by the growth cones and the somas. The system couples stochastic differential equations governing the motion of the growth cones with reaction--diffusion equations that describe the dynamics of the chemical concentrations. The existence of a unique solution to this coupled system is proved. Numerical experiments are performed to investigate the sensitivity of the model to key biological parameters. The impact of the nonlocal regularization of point sources in the reaction--diffusion equations is analyzed in a simplified deterministic setting.
\end{abstract}

% \paragraph{Keywords:}  
\keywords{Chemotaxis, stochastic differential equations, existence and uniqueness of solutions, Euler--Maruyama scheme, axon guidance.}  
 
% \paragraph{AMS classification:}  
\subjclass[2000]{35K51, 35K55, 60H10, 92B20.}

\maketitle

% possible journals
% Journal of Applied Mathematics and Computing Q2 -- eher nicht
% Journal of Computational and Applied Mathematics Q2 -- only computat.
% editor: Xingfu Zou
% Communications in Applied Mathematics and Computational Science Q2
% editor: Jian-Guo Liu

%%%%%%%%%%%%%%%%%%%%%%%%%%%%%%%%%%%%%%%%%%%%%%%%%%%%%%%%%%%%%%%%%%%

\section{Introduction}

The formation of neural networks is a highly involved process governed by many biochemical markers. Roughly speaking, the main body cell of a neuron (soma) releases neurites (cellular extensions), which follow complex external and internal cues to form synaptic connections with other neurites or somas. The tip of a growing neurite has a dynamic structure called a growth cone, which guides the neurite to its target location \cite{NaSp17}. We focus on external cues given by chemical substances that are emitted by the growth cones and the somas. Following \cite{SeBe00}, the growth cones are modeled as random walkers driven by the gradient of chemical signals, which are produced by the random walkers via reaction--diffusion equations. For details and more complex neuronal growth models, we refer to the review \cite{OlGo22}. In this paper, we prove the existence and uniqueness of a solution to the coupled stochastic-differential reaction--diffusion model and present some numerical experiments in two space dimensions.

The evolution of the positions $X_1,\ldots,X_m$ of the neurite tips, starting from the stationary soma, is described by the stochastic differential equations
\begin{align}\label{1.X}
  \dd X_j(t) = \sum_{k=1}^n b_{jk}\big(c(X_j,t),t\big)
  \na c_k(X_j,t)\dd t + \sigma_j\dd W_j(t), \quad t>0,\ j=1,\ldots,m,
\end{align}
where the chemoattractant concentrations $c_1,\ldots,c_n$ are modeled by the reaction--diffusion equations
\begin{align}\label{1.c}
  \pa_t c_i - D_i\Delta c_i + \lambda_i c_i 
  = \sum_{j=1}^m a_{ij}\big(c(X_j,t),t\big)\eta(x-X_j(t))
  \quad\mbox{in }\R^d,\ t>0,\ i=1,\ldots,n,
\end{align}
together with the initial conditions
\begin{align}\label{1.ic}
  X_j(0) = X_j^0, \quad c_i(0) = c_i^0\quad\mbox{in }\R^d,\ 
  j=1,\ldots,m,\ i=1,\ldots,n,
\end{align}
where $(W_j(t))_{t\ge 0}$ are $d$-dimensional Brownian motions and $\eta$ is a mollifier approximating the Delta distribution. The parameters are the emission rates $a_{ij}$, the weights $b_{jk}$, the stochastic diffusion coefficients $\sigma_j>0$, the chemical diffusion coefficients $D_i>0$, and the chemical degradation rates $\lambda_i>0$. The function $c$ is the solution vector $(c_1,\ldots,c_n)$. 

The source term in \eqref{1.c} is modeled in \cite{SeBe00} by the Delta distribution $\delta_0$. There are several reasons why we choose an approximation of $\delta_0$. First, it seems more realistic to assume that the random walkers emit the chemical signals close to their position, i.e.\ in a nonlocal way. Second, it is shown in \cite{FOY21} that equation \eqref{1.c} with singular source term is singular at the location where we want to evaluate the gradient. More specifically, when we replace the right-hand side of \eqref{1.c} by a combination of $\delta_0(x-X_j(t))$, we face the following regularity issue. Since $\delta_0\in H^{-s}(\R^d)$ for $s>d/2$, parabolic regularity yields $\na c_i(t)\in H^{1-s}(\R^d)$, which is not sufficient to solve \eqref{1.X}, as this equation requires a H\"older continuous drift \cite{FGP10}. It is proven in \cite[Theorem 5]{TaYa13} that if the path of the point source is H\"older continuous with index $\alpha>1/2$, the solution to the heat equation with Dirac delta source term is singular. In our situation, the path is slightly rougher since the Brownian motion is H\"older continuous with $\alpha<1/2$ only. Third, we show in Section \ref{sec.eps} in a simplified deterministic setting that the positions become stationary when the mollifier approaches $\delta_0$. These comments justify the use of the mollifier in \eqref{1.c}. 

There exist many models in the literature describing neurite growth. For instance, a stochastic model for the position of the tip of an axon (a specific type of neurite) was suggested in \cite{PCLD11}. The angle of the vector relative to the axon orientation solves a stochastic differential equation. In the work \cite{Gaf06}, the motion of the growth cone is governed by an ordinary differential equation with a source term proportional to the gradient of the chemical signal, which solves a diffusion equation. The bundling of axons and their motion towards the targets are modeled by diffusion and differential equations in \cite{HeVo99}. A time-discrete model for the position and angle of the tip, coupled with a discrete-time neural network model, describing the chemotaxis behavior of the nematode C.\ elegans, was considered in \cite{XuDe10}. In the work \cite{YFBS21}, the dynamics of the growth cones on surfaces with micropatterned periodic features are described by Fokker--Planck equations. The analysis of the system \eqref{1.X}--\eqref{1.ic} seems to be new in the literature. For a review on growth cone chemotaxis and wiring of the nervous system, we refer to \cite{MFPRG06}. 

Our main analytical result is the existence of a unique solution (strong in the stochastic sense and classical in the PDE sense) to \eqref{1.X}--\eqref{1.ic}; see Section \ref{sec.ex}. The equations are discretized by an Euler--Maruyama Galerkin approximation in Section \ref{sec.num}. In Section \ref{sec.eps}, we discuss the behavior of the solution to the parabolic equation depending on the magnitude of the regularization of the source term in a simplified deterministic setting.

%%%%%%%%%%%%%%%%%%%%%%%%%%%%%%%%%%%%%%%%%%%%%%%%%%%%%%%%%%%%%%%%%%%

\section{Existence and uniqueness of solutions}\label{sec.ex}

The aim of this section is to prove that there exists a unique solution to \eqref{1.X}--\eqref{1.ic}. Our main result reads as follows.

\begin{theorem}[Existence and uniqueness]\label{thm.ex}
Let the following assumptions hold:
\begin{itemize}
\item[\rm (A1)] Let $\sigma_j$, $D_i$, $\lambda_i$ be positive for $i=1,\ldots,n$, $j=1,\ldots,m$ and let $T>0$. 
\item[\rm (A2)] Let $X_1^0,\ldots,X_m^0$ be independent and identically distributed square-integrable random variables on the probability space $(\Omega,\mathcal{F},\mathcal{P})$ and $c_1^0,\ldots,c_n^0\in L^2(\R^d)$.
\item[\rm (A3)] Let $a_{ij}:\R^n\times[0,T]\to\R$ and $b_{jk}:\R^n\times[0,T]\to\R$ be Lipschitz continuous and bounded, where $i,k=1,\ldots,n$, $j=1,\ldots,m$.
\item[\rm (A4)] Let $\eta\in C^{0,1}(\R^d)\cap L^1(\R^d)$. 
\end{itemize}
Then there exists a unique solution (strong in the stochastic sense and classical in the PDE sense) to system \eqref{1.X}--\eqref{1.ic}. 
\end{theorem}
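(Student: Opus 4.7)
The plan is a Picard iteration decoupling the stochastic and parabolic parts. Given adapted continuous processes $X^{(k)}=(X_1^{(k)},\ldots,X_m^{(k)})$ and random fields $c^{(k)}=(c_1^{(k)},\ldots,c_n^{(k)})$ on a small interval $[0,T_*]$, I would define $c^{(k+1)}$ as the mild solution of
\[
\pa_t c_i^{(k+1)} - D_i\Delta c_i^{(k+1)} + \lambda_i c_i^{(k+1)}
= \sum_{j=1}^m a_{ij}\bigl(c^{(k)}(X_j^{(k)},t),t\bigr)\,\eta(x-X_j^{(k)}(t))
\]
via Duhamel's formula with the heat semigroup for $-D_i\Delta+\lambda_i$, and $X^{(k+1)}$ as the unique strong solution of \eqref{1.X} with $c$ replaced by $c^{(k+1)}$, driven by the same Brownian motions $W_j$. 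The goal is to show that this map is a contraction on a product Banach space combining $L^2(\Omega;C([0,T_*];\R^{dm}))$ for the positions and a pathwise $C([0,T_*];C^1(\R^d)^n)$-type space for the concentrations; the fixed point solves \eqref{1.X}--\eqref{1.ic} on $[0,T_*]$, a standard stitching argument extends it to $[0,T]$, and uniqueness follows from the same estimates applied to the difference of two hypothetical solutions.

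Each subproblem is well posed under (A1)--(A4). For the PDE, assumption (A4) together with boundedness of $a_{ij}$ makes the source $F_i^{(k)}(\cdot,t)$ lie pathwise in $C^{0,1}(\R^d)\cap L^\infty(\R^d)$ with norms independent of $\omega$. Convolution with the heat kernel and the standard bound $\|\na K_{D_i}(\tau)\|_{L^1}\lesssim\tau^{-1/2}$ (integrable at $\tau=0$), combined with the analogous bound applied to $\na F$, yields $c_i^{(k+1)},\na c_i^{(k+1)}\in L^\infty_t L^\infty_x$ with $\na c_i^{(k+1)}$ spatially Lipschitz. By (A3), the drift $\sum_k b_{jk}(c^{(k+1)}(X_j,t),t)\na c_k^{(k+1)}(X_j,t)$ is then bounded and globally Lipschitz in $X_j$, so classical It\^o theory provides a unique strong, square-integrable $X^{(k+1)}$.

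For the contraction estimate, since the Brownian forcing cancels between iterates, $X^{(k+1)}-X^{(k)}$ is a deterministic integral of drift differences, and Cauchy--Schwarz combined with Gronwall yields
\[
\mathbb{E}\sup_{s\le t}|X^{(k+1)}(s)-X^{(k)}(s)|^2
\lesssim t\,e^{Ct}\,\mathbb{E}\sup_{s\le t}\|c^{(k+1)}-c^{(k)}\|_{C^1_x}^2,
\]
using the spatial Lipschitz continuity of $\na c^{(k+1)}$. On the PDE side, the pathwise Duhamel difference formula, together with the Lipschitz properties of $a_{ij}$ and $\eta$ and the splitting
\[
c^{(k)}(X_j^{(k)})-c^{(k-1)}(X_j^{(k-1)}) = \bigl[c^{(k)}(X_j^{(k)})-c^{(k)}(X_j^{(k-1)})\bigr] + \bigl[c^{(k)}(X_j^{(k-1)})-c^{(k-1)}(X_j^{(k-1)})\bigr],
\]
gives pathwise
\[
\sup_{s\le t}\|c^{(k+1)}-c^{(k)}\|_{C^1_x}^2 \lesssim t\Bigl(\sup_{s\le t}\|c^{(k)}-c^{(k-1)}\|_{C^1_x}^2 + \sup_{s\le t}|X^{(k)}-X^{(k-1)}|^2\Bigr).
\]
Taking expectations and choosing $T_*$ small enough closes the contraction on the joint norm.

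The main technical obstacle is cleanly interfacing the $L^2(\Omega)$-averaged stochastic norms for $X$ with the essentially pathwise $C^1_x$ estimates for $c$: one must verify that the parabolic regularity bounds hold pathwise in $\omega$ with constants depending only on pathwise norms of $F^{(k)}$, so that they can be coupled with $\mathbb{E}\sup$-type norms in the stochastic step, and that the composition $c^{(k)}(X_j^{(k)}(t),t)$ is adapted and square-integrable, hence admissible as a source in the next PDE iterate. A secondary issue is the low $L^2(\R^d)$ regularity of $c_i^0$, which needs heat-kernel smoothing before $\na c_i$ becomes bounded; this is a standard estimate but has to be tracked in the constants. Once these compatibility questions are handled, the fixed-point argument itself is routine.
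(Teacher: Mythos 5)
Your plan is workable, but it takes a genuinely different route from the paper's. You run a joint Picard iteration on the pair $(X,c)$, alternating a pathwise Duhamel solve for $c^{(k+1)}$ (with the nonlinearity $a_{ij}(c^{(k)}(X_j^{(k)},t),t)$ lagged to the previous iterate) with a classical It\^o solve for $X^{(k+1)}$, and you close a contraction in a product norm mixing $\mathbb{E}\sup|\cdot|^2$ for the positions with pathwise $C^1_x$ bounds for the concentrations. The paper decouples differently: for an arbitrary \emph{fixed deterministic} continuous path $\xi$ it first solves the fully self-consistent nonlinear reaction--diffusion system by its own Banach fixed point, then proves that $c(x,t;\xi)$ and $\na c(x,t;\xi)$ are Lipschitz in $(x,\xi)$ with respect to $\|\xi-\bar\xi\|_{C^0([0,t])}$ (Lemmas \ref{lem.Lipu} and \ref{lem.LipDu}, the latter resting on the kernel bound of Lemma \ref{lem.gradPhi}, which is exactly your $\|\na K(\tau)\|_{L^1}\lesssim\tau^{-1/2}$), and finally reads \eqref{1.X} as a single path-dependent SDE with functional Lipschitz drift, concluding by \cite[Chap.~5, Theorem 7]{Pro05}. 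The paper's route keeps all PDE analysis deterministic, so the adaptedness/measurability interface you identify as your main obstacle never arises; your route avoids the functional-SDE existence theorem, at the price of tracking uniform-in-$k$ bounds and the mixed norms by hand. The individual estimates you sketch (the $t^{1/2}$ gain from the gradient kernel, the splitting of $c^{(k)}(X_j^{(k)})-c^{(k-1)}(X_j^{(k-1)})$, Gronwall to absorb the self-referential evaluation of the drift) are the same ingredients the paper uses, so the scheme should close. One caveat, which you downplay as ``secondary'' and which the paper shares: the spatial Lipschitz bounds on $c$ and $\na c$ needed for the SDE step include the homogeneous contribution $\Phi_i(\cdot,t)*c_i^0$, whose gradient is only $O(t^{-1/2-d/4})$ for $c_i^0\in L^2(\R^d)$ and hence not time-integrable near $t=0$ when $d\ge 2$; this is not merely a constant to be tracked but requires more regular initial data (the paper's Lemmas \ref{lem.Lipu}--\ref{lem.LipDu} silently omit this term).
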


The idea of the proof is to consider first the diffusion equation 
\begin{align}\label{2.u}
  \pa_t u - D\Delta u + \lambda u = f(t)\eta(x-\xi(t)), \quad t>0,
  \quad u(0)=u^0\quad\mbox{in }\R^d,
\end{align}
where $f\in C^0([0,\infty))$, $\xi$ is a continuous path on $\R^d$, $u^0\in L^2(\R^d)$, and $D>0$, $\lambda>0$. We show that $\na u$ is Lipschitz continuous with respect to $\xi$. This defines the Lipschitz continuous mapping $L:\xi\mapsto\na u$. Then the stochastic equation $\dd X = b(t)\na u(X,t)\dd t + \sigma\dd W(t)$ can be interpreted as
\begin{align*}
  \dd X = b(t)L(X)\dd t + \sigma\dd W(t), \quad t>0, \quad
  X(0) = X^0.
\end{align*}
Since $L$ is Lipschitz continuous, we conclude the existence of a solution $X$ by standard theory \cite{Pro05} and the result follows.

We proceed with the detailed proof of Theorem \ref{thm.ex}. For this, we start with some preparations and introduce the Gaussian heat kernel associated to \eqref{2.u}:
\begin{align}\label{2.Phi}
  \Phi_{D,\lambda}(x,t) = \frac{1}{(4\pi Dt)^{d/2}}
  \exp\bigg(-\frac{|x|^2}{4Dt} - \lambda t\bigg), \quad
  (x,t)\in\R^d\times(0,\infty).
\end{align}
We need the following property of the heat kernel.

\begin{lemma}\label{lem.gradPhi}
There exists $C>0$ only depending on the space dimension $d$ such that, for any $t>0$,
\begin{align*}
  \int_0^t\int_{\R^d}|\na\Phi_{D,\lambda}(z,t-s)|\dd z\dd s \le Ct^{1/2}.
\end{align*}
\end{lemma}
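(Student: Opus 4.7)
The plan is to compute $\nabla\Phi_{D,\lambda}$ explicitly from the formula \eqref{2.Phi}, reduce the spatial integral to a standard Gaussian moment by a scaling substitution, and then perform the time integration which produces the $t^{1/2}$ factor.

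First, a direct differentiation of \eqref{2.Phi} yields
\begin{align*}
  \nabla\Phi_{D,\lambda}(x,t) = -\frac{x}{2Dt}\,\Phi_{D,\lambda}(x,t),
  \qquad
  |\nabla\Phi_{D,\lambda}(x,t)| = \frac{|x|}{2Dt}\,\Phi_{D,\lambda}(x,t).
\end{align*}
Write $\tau:=t-s$ and bound the exponential damping by $e^{-\lambda\tau}\le 1$. Using the substitution $y = x/\sqrt{4D\tau}$, so that $|x| = \sqrt{4D\tau}\,|y|$ and $\dd x = (4D\tau)^{d/2}\dd y$, the spatial integral becomes
\begin{align*}
  \int_{\R^d}|\nabla\Phi_{D,\lambda}(z,\tau)|\dd z
  \le \frac{\sqrt{4D\tau}}{2D\tau}\cdot\frac{1}{\pi^{d/2}}
  \int_{\R^d}|y|\,e^{-|y|^2}\dd y
  = \frac{C_d}{\sqrt{\tau}},
\end{align*}
since $\int_{\R^d}|y|e^{-|y|^2}\dd y$ is a finite constant depending only on $d$ (computable via polar coordinates as $\omega_{d-1}\,\Gamma((d+1)/2)/2$).

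Finally, integrating this pointwise-in-time bound yields
\begin{align*}
  \int_0^t\int_{\R^d}|\nabla\Phi_{D,\lambda}(z,t-s)|\dd z\dd s
  \le \int_0^t \frac{C_d}{\sqrt{t-s}}\dd s = 2C_d\,t^{1/2},
\end{align*}
which is the claimed bound. I do not expect a real obstacle here: the argument is entirely elementary, relying only on the explicit Gaussian form of $\Phi_{D,\lambda}$ and the fact that the singularity $\tau^{-1/2}$ at $\tau=0$ is integrable. The only minor point is that, strictly speaking, the constant also depends on $D$ (and improves with $\lambda$); one can either absorb $D$ into $C$ or regard $D$ as a fixed parameter of the heat kernel.
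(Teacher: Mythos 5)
Your proof is correct and follows essentially the same route as the paper: compute $\nabla\Phi_{D,\lambda}$ explicitly, rescale the spatial Gaussian integral to a dimensional constant times $\tau^{-1/2}$, and integrate the integrable singularity in time. Your closing remark is apt: tracking the constants, one finds $C\sim D^{-1/2}$ times a purely dimensional factor, so the lemma's claim that $C$ depends only on $d$ is accurate only if $D$ is regarded as fixed (as it is in the paper's application).
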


\begin{proof}
We transform $\tau=t-s$ and then $y=z/\sqrt{2D\tau}$ (with $\dd z=(2D\tau)^{d/2}\dd y$), giving
\begin{align*}
  I &:= \int_0^t\int_{\R^d}|\na\Phi_{D,\lambda}(z,t-s)|\dd z\dd s 
  = \frac{1}{(4\pi D)^{d/2}}\int_0^t\int_{\R^d}
  \frac{e^{-|z|^2/(4D(t-s))-\lambda(t-s)}}{(t-s)^{d/2+1}}
  \frac{|z|}{2D}\dd z\dd s \\
  &= \frac{1}{2D(4\pi D)^{d/2}}\int_0^t\int_{\R^d}
  \tau^{-d/2-1}e^{-|z|^2/(4D\tau)-\lambda\tau}|z|\dd z\dd\tau \\
  &= \frac{1}{(2\pi)^{d/2}}\int_0^t \tau^{-1/2}e^{-\lambda\tau}\dd\tau
  \int_{\R^d}e^{-|y|^2/2}|y|\dd y \le Ct^{1/2},
\end{align*}
where $C>0$ is a constant only depending on the dimension $d$.
\end{proof}

In the following, $C>0$ denotes a constant whose value may change from line to line. According to the Duhamel principle, the solution to \eqref{2.u} is uniquely given by 
\begin{align*}
  u(x,t) &= \int_0^t\int_{\R^d}f(s)\eta(y-\xi(s))
  \Phi_{D,\lambda}(x-y,t-s)\dd y\dd s 
  + \int_{\R^d}u^0(x-y)\Phi_{D,\lambda}(y,t)\dd y \\
  &= \int_0^t\int_{\R^d}f(s)\eta(x-y)
  \Phi_{D,\lambda}(y-\xi(s),t-s)\dd y\dd s 
  + \int_{\R^d}u^0(x-y)\Phi_{D,\lambda}(y,t)\dd y,
\end{align*}
where the second formulation follows from exchanging the order of the convolution. This result can be proved in a similar way as in \cite[Sec.~2.3.1c, Theorem 2]{Eva98}. It yields a formula for the solution to the linear equation associated to \eqref{1.c}. To simplify the notation, we set $\Phi_i:=\Phi_{D_i,\lambda_i}$ for $i=1,\ldots,n$. These comments yield the existence of a solution to the following linear problem.

\begin{lemma}
Let $f_{ij}\in C^0([0,\infty))$, $\xi_j\in C^0(\R^d)$ for $i=1,\ldots,n$, $j=1,\ldots,m$, $\eta\in C^{0,1}(\R^d)\cap L^1(\R^d)$, and $v_i^0\in L^1(\R^d)$. Then the functions
\begin{align*}
  v_i(x,t) = \int_0^t\int_{\R^d}\sum_{j=1}^m f_{ij}(s)\eta(y-\xi_j(s))
  \Phi_i(x-y,t-s)\dd y\dd s
  + \int_{\R^d} v_i^0(x-y)\Phi_i(y,t)\dd y
\end{align*}
are continuous in $\R^d\times(0,\infty)$ and uniquely solve
\begin{align*}
  \pa_t v_i - D_i\Delta v_i + \lambda_i v_i
  = \sum_{j=1}^m f_{ij}(t)\eta(x-\xi_j(t))\quad\mbox{in }\R^d,\ t>0,
\end{align*}
with initial condition $v_i(0)=v_i^0$ in $\R^d$, $i=1,\ldots,n$.
\end{lemma}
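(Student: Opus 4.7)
The approach is to exploit the linearity of \eqref{2.u} and the classical theory of the heat equation, splitting
\[
v_i=v_i^{\rm hom}+v_i^{\rm src},\qquad v_i^{\rm hom}(x,t):=\int_{\R^d} v_i^0(x-y)\Phi_i(y,t)\dd y,
\]
with $v_i^{\rm src}$ denoting the remaining Duhamel convolution involving the source $g_i(y,s):=\sum_{j=1}^m f_{ij}(s)\eta(y-\xi_j(s))$. I plan to verify continuity on $\R^d\times(0,\infty)$, the pointwise PDE, and the initial condition separately for each piece, using that the kernel $\Phi_i\in C^\infty(\R^d\times(0,\infty))$ has rapid Gaussian decay in space and satisfies $(\pa_t-D_i\Delta+\lambda_i)\Phi_i=0$ for $t>0$.

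For $v_i^{\rm hom}$, Gaussian bounds on $\pa_x^\alpha\pa_t^\beta\Phi_i$ together with $v_i^0\in L^1(\R^d)$ justify differentiation under the integral on compact subsets of $\R^d\times(0,\infty)$; since $\Phi_i$ solves the homogeneous equation, so does $v_i^{\rm hom}$, and the approximate-identity property of $\Phi_i$ yields $v_i^{\rm hom}(\cdot,t)\to v_i^0$ in $L^1(\R^d)$ as $t\to 0^+$. For $v_i^{\rm src}$, I would first note that $g_i$ is continuous on $\R^d\times[0,T]$ and uniformly Lipschitz in $x$, inheriting these properties from $\eta$, $f_{ij}$, and $\xi_j$; since a Lipschitz $L^1$ function on $\R^d$ is automatically bounded, $g_i$ is bounded as well. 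By the classical parabolic-potential argument, a source with this regularity produces a $C^{2,1}(\R^d\times(0,T])$ classical solution. The pointwise identity follows by shifting one spatial derivative from $\Phi_i$ onto $g_i$ via the Lipschitz bound and controlling the resulting time-singular space-time integral with Lemma~\ref{lem.gradPhi}; the initial condition $v_i^{\rm src}(\cdot,t)\to 0$ as $t\to 0^+$ follows from a direct uniform estimate, since $g_i$ is bounded and the time interval shrinks.

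Uniqueness is a standard consequence of linearity: the difference $w$ of two solutions satisfies the homogeneous problem with zero initial datum, and multiplying by $w$, integrating over $\R^d$, and applying Gronwall's lemma yields $w\equiv 0$, after using the Gaussian decay inherited from the Duhamel representation to justify the required integration by parts. The main obstacle I anticipate is the pointwise verification of the PDE for $v_i^{\rm src}$: one cannot naively pass $\Delta$ under the Duhamel integral because $\Phi_i(\cdot,t-s)$ concentrates into a Dirac mass as $s\uparrow t$. Circumventing this singularity is precisely where the $C^{0,1}$-regularity of $\eta$ in (A4) becomes essential, since it enables the spatial-derivative transfer from $\Phi_i$ to $g_i$ that keeps the resulting time integral absolutely convergent.
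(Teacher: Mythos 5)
Your proposal is correct and follows essentially the same route as the paper, which simply invokes the Duhamel principle and defers the verification to the classical argument of \cite[Sec.~2.3.1c, Theorem 2]{Eva98} without giving further detail. The one point to tighten is uniqueness: the Gaussian decay you invoke to justify the integration by parts is a property of the Duhamel representation, not of an arbitrary competitor solution, so the energy/Gronwall argument needs an a priori restriction to a solution class (e.g., bounded solutions) in which Tychonoff-type nonuniqueness for the Cauchy problem on $\R^d$ is excluded.
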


For given continuous paths $\xi_1,\ldots,\xi_m$, we wish to solve the nonlinear equations \eqref{1.c}. 

\begin{lemma}
Let $\xi_1,\ldots,\xi_m$ be continuous paths on $\R^d$, let $T>0$, and let $a_{ij}:\R^n\times[0,T]\to\R$ be Lipschitz continuous and bounded. Then the system
\begin{align}\label{2.ui}
  \pa_t u_i - D_i\Delta u_i + \lambda_i u_i
  = \sum_{j=1}^m a_{ij}\big(u(\xi_j(t),t),t\big)\eta(x-\xi_j(t))
  \quad\mbox{in }\R^d,\ t>0, 
\end{align}
has a unique continuous solution $u_i(x,t;\xi):=u_i(x,t)$ satisfying $u_i(0)=u_i^0$ in $\R^d$ for $i=1,\ldots,n$, recalling that $\Phi_i=\Phi_{D_i,\lambda_i}$, where $\Phi_{D_i,\lambda_i}$ is given by \eqref{2.Phi}.
\end{lemma}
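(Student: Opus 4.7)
The plan is to recast \eqref{2.ui} as a fixed-point problem via the Duhamel formula of the preceding lemma. For a candidate $w = (w_1, \ldots, w_n)$ in a suitable space of continuous functions on $\R^d \times [0,T]$, I would define the operator $\mathcal{T}$ component-wise by
\begin{align*}
  (\mathcal{T} w)_i(x,t) &= \int_0^t\int_{\R^d} \sum_{j=1}^m
  a_{ij}\bigl(w(\xi_j(s),s),s\bigr)\,\eta(y-\xi_j(s))\,
  \Phi_i(x-y,t-s)\,\dd y\,\dd s \\
  &\quad{}+ \int_{\R^d} u_i^0(x-y)\Phi_i(y,t)\,\dd y.
\end{align*}
By the previous lemma, $\mathcal{T} w$ is continuous in $\R^d \times (0,\infty)$ and is the unique solution to the linear heat problem whose source replaces $u(\xi_j(t),t)$ by $w(\xi_j(t),t)$. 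Thus a fixed point of $\mathcal{T}$ is exactly the solution sought.

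Next I would set up a complete metric space on which $\mathcal{T}$ contracts. Since $a_{ij}$ is bounded, the source of the linear equation is uniformly bounded in space and time by $(\sup|a_{ij}|)\|\eta\|_\infty$, so $\mathcal{T} w$ lies in $C_b^0(\R^d \times [0,T])^n$ whenever the initial layer $u_i^0 \ast \Phi_i(\cdot,t)$ does; it suffices to work on the space $E_T$ of functions $w$ that are continuous on $\R^d \times [0,T]$, agree with $u_i^0 \ast \Phi_i(\cdot,t)$ as $t \to 0^+$, and are bounded, equipped with the weighted sup norm
\begin{align*}
  \|w\|_K := \sup_{0\le t\le T}\,e^{-Kt}\,\|w(\cdot,t)\|_{L^\infty(\R^d)^n}
\end{align*}
for a parameter $K>0$ to be chosen. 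The bounded-initial-layer assumption is not restrictive because for any $t>0$ the heat convolution of an $L^2$ datum is smooth and bounded; the pointwise evaluations $w(\xi_j(s),s)$ are then well-defined and continuous in $s$.

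For the contraction estimate, I would use the Lipschitz continuity of $a_{ij}$ (with constant $L$) to obtain
\begin{align*}
  |(\mathcal{T}w)_i(x,t) - (\mathcal{T}\widetilde w)_i(x,t)|
  &\le L\sum_{j=1}^m \int_0^t \bigl|w(\xi_j(s),s)-\widetilde w(\xi_j(s),s)\bigr|
  \int_{\R^d}|\eta(y-\xi_j(s))|\Phi_i(x-y,t-s)\,\dd y\,\dd s \\
  &\le Lm\|\eta\|_\infty \int_0^t \|w(\cdot,s)-\widetilde w(\cdot,s)\|_\infty\,\dd s,
\end{align*}
using that $\int_{\R^d}\Phi_i(x-y,t-s)\dd y \le 1$. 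Multiplying by $e^{-Kt}$ and taking the supremum gives $\|\mathcal{T}w - \mathcal{T}\widetilde w\|_K \le (Lm\|\eta\|_\infty/K)\|w - \widetilde w\|_K$, which is a strict contraction once $K > Lm\|\eta\|_\infty$. The Banach fixed-point theorem then yields a unique $u \in E_T$ with $\mathcal{T} u = u$, and continuity of $u$ on $\R^d \times [0,T]$ is inherited from the Duhamel representation.

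The main obstacle I anticipate is not the contraction itself, which is routine once the setting is in place, but rather choosing a functional framework in which (i) the pointwise evaluations $u(\xi_j(t),t)$ making the source term nonlinear are unambiguously defined and continuous in $t$ up to $t=0$, and (ii) the $L^2$ (or possibly merely $L^1$) initial data $u_i^0$ are accommodated. This is handled by incorporating the initial-layer regularization $u_i^0 \ast \Phi_i(\cdot,t)$ directly into the definition of $E_T$, so that only the source-generated part of $u$ needs to be controlled in a uniform norm, while the free-evolution part supplies the matching with the initial condition in a weaker sense that suffices for the equation to hold classically on $\R^d \times (0,T]$.
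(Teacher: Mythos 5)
Your proposal is correct and follows essentially the same route as the paper: recast \eqref{2.ui} via the Duhamel formula as a fixed-point equation and contract using the Lipschitz continuity of $a_{ij}$ together with the bound $\int_{\R^d}\Phi_i(x-y,t-s)\,\dd y\le 1$. The only difference is a standard technical device: you obtain the contraction on all of $[0,T]$ at once via the exponentially weighted norm $\sup_t e^{-Kt}\|\cdot\|_{L^\infty}$ with $K$ large, whereas the paper contracts in the plain $L^\infty$ norm on a short time interval and then continues the solution; your explicit handling of the initial layer $u_i^0*\Phi_i(\cdot,t)$ for merely $L^2$ data is a point of care the paper leaves implicit.
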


\begin{proof}
The result follows from Banach's fixed-point theorem. For this, we introduce for some $T>0$ the space
\begin{align*}
  Z_T = C^0(\R^d\times(0,T);\R^n)\cap L^\infty(\R^d\times(0,T);\R^n)
  \cap L^1(\R^d\times(0,T);\R^n),
\end{align*}
endowed with the $L^\infty(\R^d\times(0,T);\R^n)$ norm, and define the functional $F_i:Z_T\to Z_T$ by
\begin{align*}
  F(u)(x,t) &= \int_0^t\int_{\R^d}\sum_{j=1}^m 
  a_{ij}\big(u(\xi_j(s),s),s\big)
  \eta(y-\xi_j(s))\Phi_i(x-y,t-s)\dd y\dd s \\
  &\phantom{xx}+ \int_{\R^d}u^0_i(x-y)\Phi_i(y,t)\dd y
\end{align*}
for $u=(u_1,\ldots,u_n)\in Z_T$ and $(x,t)\in\R^d\times(0,T)$. To show that $F_i$ is a contraction on $Z_T$ for some sufficiently small $T>0$, we use the Lipschitz continuity of $a_{ij}$ and compute for $u$, $\bar{u}\in Z_T^n$:
\begin{align*}
  \|F_i(u)-F_i(\bar{u})\|_{Z_T}
  &\le \sup_{(x,t)\in\R^d\times(0,T)}\int_0^t\int_{\R^d}\sum_{j=1}^m
  \big|a_{ij}\big(u(\xi_j(s),s),s\big) 
  - a_{ij}\big(\bar{u}(\xi_j(s),s),s\big)\big| \\
  &\phantom{xx}\times|\eta(y-\xi_j(s))|\Phi_i(x-y,t-s)\dd y\dd s \\
  &\le C\sup_{(x,t)\in\R^d\times(0,T)}\int_0^t\sum_{j=1}^m
  |u(\xi_j(s),s)-\bar{u}(\xi_j(s),s)| \\
  &\phantom{xx}\times\int_{\R^d}
  |\eta(y-\xi_j(s))|\Phi_i(x-y,t-s)\dd y\dd s \\
  &\le C\int_0^t\|u-\bar{u}\|_{Z_T}\dd s
  \le CT\|u-\bar{u}\|_{Z_T}.
\end{align*}
Thus, choosing $T>0$ such that $CT<1$, $F_i$ is a contraction on $Z_T$, and Banach's fixed-point theorem gives the existence of a unique continuous solution to \eqref{2.ui} on $(0,T)$. We can extend the solution for all $t\in(0,T)$ by repeating the argument. 
\end{proof}

The solution $u_i(x,t;\xi)$ is Lipschitz continuous with respect to $(x,\xi)$, where $\xi=(\xi_1,\ldots,$ $\xi_m)$.

\begin{lemma}\label{lem.Lipu}
There exists a constant $C(t)$, which is increasing and continuous in $t$, such that for all continuous paths $\xi$, $\bar\xi$ on $\R^d$ and points $x$, $\bar{x}\in\R^d$,
\begin{align*}
  |u_i(x,t;\xi)-u_i(\bar{x},t;\bar\xi)|
  \le C(t)\big(|x-\bar{x}| + \|\xi-\bar\xi\|_{C^0([0,t])}\big).
\end{align*}
\end{lemma}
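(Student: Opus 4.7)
The plan is to combine the Duhamel representation of $u_i$ with a Gronwall argument, splitting the difference $u_i(x,t;\xi)-u_i(\bar x,t;\bar\xi)$ through intermediate quantities so that the variations in $x$ and in $\xi$ can be treated one at a time. I introduce the smoothed heat kernel $K_i(z,\tau):=(\eta*\Phi_i(\cdot,\tau))(z)$; a change of variables in the spatial convolution in the source term gives
\begin{align*}
u_i(x,t;\xi) = \int_0^t\sum_{j=1}^m a_{ij}\big(u(\xi_j(s),s;\xi),s\big)\,K_i(x-\xi_j(s),t-s)\,\dd s + (u_i^0*\Phi_i(\cdot,t))(x).
\end{align*}
From $\int_{\R^d}\Phi_i(y,\tau)\dd y=e^{-\lambda_i\tau}$ combined with the Lipschitz continuity of $\eta$, one obtains the working bounds $|K_i(z,\tau)|\le\|\eta\|_{L^\infty}e^{-\lambda_i\tau}$ and $|K_i(z,\tau)-K_i(\bar z,\tau)|\le L_\eta e^{-\lambda_i\tau}|z-\bar z|$, where $L_\eta$ denotes the Lipschitz constant of $\eta$.

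I then proceed in two steps and combine. \emph{Step 1 (Lipschitz in $x$ for fixed $\xi$):} Only $K_i(x-\xi_j(s),t-s)$ and the initial-datum convolution depend on $x$ in the formula above; the kernel bound controls the source-term contribution directly, while the initial-datum contribution is estimated by $\|\na(u_i^0*\Phi_i)(\cdot,t)\|_{L^\infty}|x-\bar x|$, which is a continuous function of $t>0$. This produces $|u_i(x,t;\xi)-u_i(\bar x,t;\xi)|\le L(t)|x-\bar x|$ with $L$ continuous and independent of $\xi$. \emph{Step 2 (Continuous dependence on $\xi$ for fixed $x$):} The initial-datum contributions cancel, and the source-term difference is decomposed via three telescoping summands that isolate, respectively, the variation of $u(\cdot,s;\cdot)$ inside $a_{ij}$, the variation of $\xi_j(s)$ inside $a_{ij}$, and the variation of $\xi_j(s)$ inside $K_i$. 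The first summand contributes $L_a\|\eta\|_{L^\infty}\int_0^t M(s)\,\dd s\cdot\|\xi-\bar\xi\|_{C^0([0,t])}$, where $L_a$ is the Lipschitz constant of $a$ and $M(s)$ denotes the sought Lipschitz-in-$\xi$ constant at time $s$; the second is handled by applying Step 1 at the points $x=\xi_j(s)$, $\bar x=\bar\xi_j(s)$, contributing $L_a\|\eta\|_{L^\infty}\int_0^t L(s)\,\dd s\cdot\|\xi-\bar\xi\|_{C^0([0,t])}$; the third is controlled directly by the Lipschitz bound on $K_i$. After factoring $\|\xi-\bar\xi\|_{C^0([0,t])}$ out, one arrives at the linear integral inequality
\begin{align*}
M(t)\le\tilde C\int_0^t\big(M(s)+L(s)+1\big)\,\dd s,
\end{align*}
which Gronwall's lemma closes and yields a continuous, increasing $M$. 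Combining Steps 1 and 2 by the triangle inequality and setting $C(t):=\max\{L(t),M(t)\}$ delivers the claimed estimate.

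The main obstacle is the circular structure of the $\xi$-dependence inside the nonlinearity $a_{ij}(u(\xi_j,s;\xi),s)$: the Lipschitz bound for $u$ is itself needed to estimate the very difference that is used to prove it. The telescoping decomposition resolves this by isolating the self-referential contribution into a single term, producing a Gronwall-type iteration in $M$ alone, while all other contributions are already controlled by Step 1 and by the Lipschitz property of $K_i$. A secondary technicality is the behaviour of $\|\na(u_i^0*\Phi_i)(\cdot,t)\|_{L^\infty}$ as $t\to 0^+$, whose growth depends on the regularity of $u_i^0$ and governs the growth of $L(t)$ (and hence of $C(t)$) near $t=0$.
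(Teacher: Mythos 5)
Your proposal is correct and follows essentially the same route as the paper: split the difference into an $x$-variation and a $\xi$-variation, exploit the Lipschitz continuity of $\eta$ and $a_{ij}$ together with the boundedness of $a_{ij}$ and the integrability of the heat kernel, and close the self-referential term by Gronwall. The only differences are cosmetic --- the paper runs Gronwall on the concrete quantity $\sum_j|u(\xi_j(s),s;\xi)-u(\bar\xi_j(s),s;\bar\xi)|$ rather than on the abstract Lipschitz constant $M(t)$ (which sidesteps the need to justify a priori that $M$ is finite), while you explicitly account for the initial-datum convolution and its behaviour as $t\to 0^+$, a term the paper's estimate of $I_1$ silently omits.
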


\begin{proof}
By the triangle inequality,
\begin{align}\label{2.aux}
  & |u_i(x,t;\xi)-u_i(\bar{x},t;\bar\xi)| \le I_1+I_2, \quad
  \mbox{where} \\
  & I_1 = |u_i(x,t;\xi)-u_i(\bar{x},t;\xi)|, \quad
  I_2 = |u_i(\bar{x},t;\xi)-u_i(\bar{x},t;\bar\xi)|.
  \nonumber 
\end{align}
We use the Lipschitz continuity of $\eta$ and the boundedness of $a_{ij}$ to find that
\begin{align*}
  I_1 &\le \int_0^t\int_{\R^d}\sum_{j=1}^m
  |\eta(x-y)-\eta(\bar{x}-y)|
  |a_{ij}(u(\xi_j(x),s;\xi)|\Phi_i(y-\xi_j(s),t-s)\dd y\dd s \\
  &\le C|x-\bar{x}|\sum_{j=1}^m\int_0^t\int_{\R^d}
  \Phi_i(y-\xi_j(s),t-s)\dd y\dd s \\
  &\le C|x-\bar{x}|\sum_{j=1}^m\int_0^t\int_{\R^d}
  \Phi_i(z,t-s)\dd z\dd s \le Ct|x-\bar{x}|.
\end{align*}
For $I_2$, we use the Lipschitz continuity of $\eta$ and $a_{ij}$:
\begin{align*}
  I_2 &\le \int_0^t\int_{\R^d}\sum_{j=1}^m
  \big|a_{ij}\big(u(\xi_j(s),s;\xi),s\big)\eta(y-\xi_j(s))
  - a_{ij}\big(u(\bar{\xi}_j(s),s;\bar\xi),s\big)
  \eta(y-\bar{\xi}_j(s))\big| \\
  &\phantom{xx}\times\Phi_i(\bar{x}-y,t-s)\dd y\dd s \\
  &\le \int_0^t\int_{\R^d}\sum_{j=1}^m
  \big|\eta(y-\xi_j(s))-\eta(y-\bar{\xi}_j(s))\big|
  \big|a_{ij}\big(u(\xi_j(s),s;\xi),s\big)\big|
  \Phi_i(\bar{x}-y,t-s)\dd y\dd s \\
  &\phantom{xx}+ \int_0^t\int_{\R^d}\sum_{j=1}^m
  \big|a_{ij}\big(u(\xi_j(s),s;\xi),s\big)
  - a_{ij}\big(u(\bar{\xi}_j(s),s;\bar\xi),s\big)\big| \\
  &\phantom{xx+}
  \times|\eta(y-\bar{\xi}_j(s))|\Phi_i(\bar{x}-y,t-s)\dd y\dd s \\
  &\le Ct\|\xi-\bar\xi\|_{C^0(0,t)} + C\int_0^t\sum_{j=1}^m
  \big|u(\xi_j(s),s;\xi) - u(\bar{\xi}_j(s),s;\bar\xi)\big|\dd s.
\end{align*}
Summing \eqref{2.aux} over $i=1,\ldots,n$, we infer that
\begin{align}\label{2.aux2}
  \big|u(x,t;\xi)-u(\bar{x},t;\bar\xi)\big|
  &\le Ct\big(|x-\bar{x}| + \|\xi-\bar\xi\|_{C^0([0,t])}\big) \\
  &\phantom{xx}+ C\int_0^t\sum_{j=1}^m
  \big|u(\xi_j(s),s;\xi) - u(\bar{\xi}_j(s),s;\bar\xi)\big|\dd s.
  \nonumber
\end{align}
We replace in this inequality $(x,\bar{x})$ by $(\xi_j(s),\bar{\xi}_j(s))$ and apply Gronwall's lemma:
\begin{align*}
  \sum_{j=1}^m\big|u(\xi_j(t),t;\xi) 
  - u(\bar{\xi}_j(t),t;\bar\xi)\big| \le Ct(1+e^{Ct})
  \|\xi-\bar\xi\|_{C^0([0,t])}.
\end{align*}
Finally, we use this estimate in \eqref{2.aux2}:
\begin{align*}
  |u(&x,t;\xi)-u(\bar{x},t;\bar\xi)|
  \le Ct\big(|x-\bar{x}| + \|\xi-\bar\xi\|_{C^0([0,t])}\big) \\
  &\phantom{xx}+ C\|\xi-\bar\xi\|_{C^0([0,t])}\int_0^t s(1+e ^{Cs})\dd s
  \le C(t)\big(|x-\bar{x}| + \|\xi-\bar\xi\|_{C^0([0,t])}\big),
\end{align*}
finishing the proof.
\end{proof}

Next, we show that $\na u_i(x,t;\xi)$ is also Lipschitz continuous with respect to $(x,\xi)$.

\begin{lemma}\label{lem.LipDu}
It holds for all continuous paths $\xi$, $\bar\xi$ on $\R^d$ and points $x$, $\bar{x}\in\R^d$ that
\begin{align*}
  |\na u_i(x,t;\xi)-\na u_i(\bar{x},t;\bar\xi)|
  \le C(t)t^{1/2}\big(|x-\bar{x}| + \|\xi-\bar\xi\|_{C^0([0,t])}\big),
\end{align*}
where $C(t)>0$ depends on time. 
\end{lemma}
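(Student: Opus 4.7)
The plan is to mimic the decomposition used in the proof of Lemma \ref{lem.Lipu}: write
\begin{align*}
  |\na u_i(x,t;\xi) - \na u_i(\bar x, t; \bar\xi)| \le I_1 + I_2,
\end{align*}
with $I_1 = |\na u_i(x,t;\xi) - \na u_i(\bar x, t; \xi)|$ (sensitivity in the spatial variable) and $I_2 = |\na u_i(\bar x, t; \xi) - \na u_i(\bar x, t; \bar\xi)|$ (sensitivity in the path). Since $\Phi_i(\cdot, t-s)$ is smooth for every $t > s$, the Duhamel formula may be differentiated by placing $\na$ on the heat kernel, and this is the starting representation for both $I_1$ and $I_2$.

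For $I_1$, the crucial device is a translation-of-variable trick: changing variables $z = x-y$ in the $x$-integral and $z = \bar x - y$ in the $\bar x$-integral, the arguments of $\na\Phi_i(z,t-s)$ coincide and the difference is carried entirely by $\eta(x - z - \xi_j(s)) - \eta(\bar x - z - \xi_j(s))$, which is bounded pointwise by $C|x-\bar x|$ thanks to assumption (A4). Invoking Lemma \ref{lem.gradPhi} on the remaining $\int_0^t\int_{\R^d} |\na\Phi_i(z,t-s)|\,\dd z\,\dd s$ then yields the source-term bound $Ct^{1/2}|x-\bar x|$. The initial-condition contribution $u_i^0 \ast \na\Phi_i(\cdot,t)$ is handled separately by Cauchy--Schwarz together with the $L^2$-estimate $\|D^2\Phi_i(\cdot,t)\|_{L^2}<\infty$ valid for $t>0$; this produces a $|x-\bar x|$ bound with a purely time-dependent prefactor that I absorb into $C(t)$.

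For $I_2$, the initial-condition piece is $\xi$-independent and vanishes. In the source term I split
\begin{align*}
  a_{ij}\bigl(u(\xi_j,s;\xi),s\bigr)&\eta(y-\xi_j(s)) - a_{ij}\bigl(u(\bar\xi_j,s;\bar\xi),s\bigr)\eta(y-\bar\xi_j(s)) \\
  &= a_{ij}\bigl(u(\xi_j,s;\xi),s\bigr)\bigl[\eta(y-\xi_j(s)) - \eta(y-\bar\xi_j(s))\bigr] \\
  &\phantom{xx}+ \bigl[a_{ij}\bigl(u(\xi_j,s;\xi),s\bigr) - a_{ij}\bigl(u(\bar\xi_j,s;\bar\xi),s\bigr)\bigr]\eta(y-\bar\xi_j(s)),
\end{align*}
bounding the first bracket by $C|\xi_j(s)-\bar\xi_j(s)|$ via Lipschitz continuity of $\eta$ and the second by $C(s)\|\xi-\bar\xi\|_{C^0([0,s])}$ via Lipschitz continuity of $a_{ij}$ combined with Lemma \ref{lem.Lipu}. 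Multiplying by $|\na\Phi_i|$ and applying Lemma \ref{lem.gradPhi} once more yields $I_2 \le C(t)\,t^{1/2}\|\xi-\bar\xi\|_{C^0([0,t])}$.

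The main subtlety is confined to $I_1$: differentiating $\eta$ is not permitted, because (A4) provides no modulus of continuity for $\na\eta$, so the gradient must remain on the heat kernel and the translation trick is what exposes the factor $|x-\bar x|$ while preserving the $t^{1/2}$ improvement from Lemma \ref{lem.gradPhi}. A secondary point is that the initial-condition contribution is not $t^{1/2}$-small and actually blows up as $t \downarrow 0$ (since $u_i^0$ is merely in $L^2$), but since the lemma statement allows $C(t)$ to be any positive function of $t$, this term is absorbed without issue.
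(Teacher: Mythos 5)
Your proof follows essentially the same route as the paper's: the same split into a spatial increment and a path increment, with the gradient kept on the heat kernel so that the $x$-difference is carried by the Lipschitz function $\eta$ and Lemma \ref{lem.gradPhi} supplies the $t^{1/2}$ factor, and the same two-term splitting of the source for the path increment using the Lipschitz continuity of $a_{ij}$, $\eta$ and Lemma \ref{lem.Lipu}. The only substantive difference is that you explicitly bound the initial-data contribution $u_i^0*\na\Phi_i$, which the paper silently omits; your Cauchy--Schwarz argument for it is fine given that the statement allows an arbitrary time-dependent constant $C(t)$.
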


\begin{proof}
We estimate similarly as in the proof of Lemma \ref{lem.Lipu}:
\begin{align}\label{2.aux3}
  & |\na u_i(x,t;\xi)-\na u_i(\bar{x},t;\bar\xi)|
  \le I_3 + I_4, \quad\mbox{where} \\
  & I_3 = |\na u_i(x,t;\xi)-\na u_i(\bar{x},t;\xi)|, \quad
  I_4 = |\na u_i(\bar{x},t;\xi)-\na u_i(\bar{x},t;\bar\xi)|.
  \nonumber 
\end{align}
We need the boundedness of $a_{ij}$, the Lipschitz continuity of $\eta$, and Lemma \ref{lem.gradPhi} to estimate
\begin{align*}
  I_3 &\le \int_0^t\int_{\R^d}\sum_{j=1}^m
  \big|a_{ij}\big(u(\xi_j(s),s;\xi),s\big)\big|
  |\eta(x-y)-\eta(\bar{x}-y)|
  |\na\Phi_i(y-\xi_j(s),t-s)|\dd y\dd s \\
  &\le C|x-\bar{x}|\int_0^t\int_{\R^d}\sum_{j=1}^m
  |\na\Phi_i(y-\xi_j(s),t-s)|\dd y\dd s \\
  &\le C|x-\bar{x}|\int_0^t\int_{\R^d}
  |\na\Phi_i(z,t-s)|\dd y\dd s \le Ct^{1/2}|x-\bar{x}|.
\end{align*}
Furthermore, using the estimate of Lemma \ref{lem.Lipu},
\begin{align*}
  I_4 &\le \int_0^t\int_{\R^d}\sum_{j=1}^m
  \big|a_{ij}\big(u(\xi_j(s),s;\xi),s\big)\eta(y-\xi_j(s))
  -a_{ij}\big(u(\bar{\xi}_j(s),s;\bar\xi),s\big)\eta(y-\bar{\xi}_j(s))
  \big| \\
  &\phantom{xx}\times
  |\na\phi_i(\bar{x}-y,t-s)|\dd y\dd s \\
  &\le \int_0^t\int_{\R^d}\sum_{j=1}^m
  \big|a_{ij}\big(u(\xi_j(s),s;\xi),s\big)
  -a_{ij}\big(u(\bar{\xi}_j(s),s;\bar\xi),s\big)\big| \\
  &\phantom{xxxx}\times |\eta(y-\xi_j(s))|
  |\na\phi_i(\bar{x}-y,t-s)|\dd y\dd s \\
  &\phantom{xx}+ \int_0^t\int_{\R^d}\sum_{j=1}^m
  \big|a_{ij}\big(u(\bar{\xi}_j(s),s;\bar{\xi}),s\big)\big|
  \big|\eta(y-\xi_j(s))-\eta(y-\bar{\xi}_j(s))\big| \\
  &\phantom{xxxx}\times|\na\phi_i(\bar{x}-y,t-s)|\dd y\dd s \\
  &\le \int_0^t\int_{\R^d}\sum_{j=1}^m
  \Big(\big|u(\xi_j(s),s;\xi)-u(\bar{\xi}_j(s),s;\bar\xi)\big|
  + |\xi_j(s)-\bar{\xi}_j(s)|\Big) \\
  &\phantom{xx}\times|\na\phi_i(\bar{x}-y,t-s)|\dd y\dd s
  \le C(t)t^{1/2}\|\xi-\bar{\xi}\|_{C^0([0,t])}.
\end{align*}
Combining the estimates for $I_3$ and $I_4$ in \eqref{2.aux3} concludes the proof.
\end{proof}

We are now in the position to prove the existence result. 

\begin{proof}[Proof of Theorem \ref{thm.ex}]
Let $X=(X_1,\ldots,X_m)$ be a continuous path in $\R^{dm}$ and let $c=c(x,t;X)$ be the unique solution to \eqref{1.c} with initial data in \eqref{1.ic}. Lemma \ref{lem.LipDu} shows that for some $t>0$ the mapping 
\begin{align*}
  \R^d\times C^0([0,t];\R^{dm})\to \R^{dn}, \quad 
  (x,X)\mapsto \na c(x,t;X)
\end{align*}
is Lipschitz continuous. Moreover, by Lemma \ref{lem.Lipu}, the mapping $(x,X)\mapsto c(x,t;X)$ is Lip\-schitz continuous. Consequently, 
\begin{align*}
  (x,X)\mapsto \sum_{k=1}^m b_{jk}\big(c(X_j(t),t;X),t\big)
  \na c(X_j(t),t;X)
\end{align*}
is Lipschitz continuous, since $b_{jk}$ is bounded by assumption and $\na c$ is bounded as a consequence of Lemma \ref{lem.LipDu}. Thus, system \eqref{1.X}--\eqref{1.c} can be reduced to a system of stochastic differential equations with a drift term that is Lipschitz continuous with respect to $X$. We infer from \cite[Chap.~5, Theorem 7]{Pro05} the existence of a unique strong solution $X\in C^0([0,\infty);\R^{dm})$ to \eqref{1.X}--\eqref{1.ic}. This proves Theorem \ref{thm.ex}. 
\end{proof}

%%%%%%%%%%%%%%%%%%%%%%%%%%%%%%%%%%%%%%%%%%%%%%%%%%%%%%%%%%%%%%%%%%%

\section{Numerical scheme}\label{sec.num}

We discretize the reaction--diffusion equations \eqref{1.c} in space by a Galerkin method. To this end, let $\Omega=(-L,L)^2\subset\R^2$ for some $L>0$ with periodic boundary conditions and let $V_h\subset H^1(\Omega)$ be a finite-dimensional subspace with basis $(v_1,\ldots,v_N)$. The Galerkin approximation of $c_i$ is given by $c_i = \sum_{k=1}^N q_{ik} v_k$. Using this approximation in \eqref{1.c} with the test function $v_\ell$ yields the differential system
\begin{align}\label{3.q}
  Mq_i' + B_iq_i = f_i(X(t)), \quad t>0,\ i=1,\ldots,n,
\end{align}
where $q_i=(q_{i1},\ldots,q_{iN})$, $M=(M_{jk})$, $B_i=(B_{ijk})$, $f_i=(f_{i1},\ldots,f_{iN})$, and
\begin{align*}
  & M_{jk} = \int_\Omega v_jv_k\dd x, \quad
  B_{ijk} = \int_\Omega(D_i\na v_j\cdot\na v_k + \lambda v_jv_k)\dd x, \\
  & f_{ik}(X(t)) = \sum_{j=1}^m\int_\Omega a_{ij}\big(c(X_j(t),t),t\big)
  \eta(x-X_j(t))v_k(x)\dd x
\end{align*}
for $i=1,\ldots,n$ and $k=1,\ldots,N$. The stochastic differential equation \eqref{1.X} becomes
\begin{align}\label{3.X}
  \dd X_j(t) = \sum_{k=1}^n\sum_{\ell=1}^N b_{jk}(c(X_j),t)
  q_{k\ell}(t)\na v_\ell(X_j,t)\dd t + \sigma_j\dd W_j(t), \quad
  t>0,\ j=1,\ldots,m.
\end{align}
We choose the mollifier $\eta(x)=\eta_\eps(x)=(4\pi\eps)^{-1}\exp(-|x|^2/(4\eps))$ for $x\in\R^2$. We discretize in time by solving \eqref{3.X} by the Euler--Maruyama scheme and \eqref{3.q} by the implicit Euler method on the partition $t_\alpha=\alpha\Delta t$ for $\alpha\ge 0$ and some $\Delta t>0$:
\begin{align}\label{3.Xa}
  X_j^{[\alpha+1]} &= X_j^{[\alpha]} + \sum_{k=1}^n\sum_{\ell=1}^N 
  b_{jk}(c(X_j^{[\alpha]}))q_{k\ell}(t_\alpha)
  \na v_\ell(X_j^{[\alpha]},t_\alpha)\Delta t 
  + \sigma_j\Delta W_j, \\
  Mq_i^{[\alpha+1]} &= Mq_i^{[\alpha]} - \Delta t B_iq_i^{[\alpha+1]}
  + \Delta t f_i(X^{[\alpha+1]}), \label{3.qa}
\end{align}
where $\Delta W_j$ is standard normally distributed.

We use linear finite elements with NGSolve on a uniform partition of $\Omega=(-L,L)^2$ with size $L=3$ and space grid $\Delta x=0.05$. The time step size equals $\Delta t = 0.001$. If a growth cone is close to a soma or another growth cone (we use the threshold 0.1), we stop its movement and the emission of chemicals, i.e., it becomes inactive. The point sources are implemented by copying the source into every ``adjacent'' domain (discarding parts that have crossed the periodic boundary more than once). 

For the simulations, we consider a system inspired by \cite{SeBe00}. We choose three chemical substances: an attractive one ($c_1$), a repulsive one ($c_2$), and a triggering one ($c_3$), as well as the positions of the growth cones $X_1^{(1)},\ldots,X_g^{(1)}$ of the neurites and the somas $X_{1}^{(2)},\ldots,X_{s}^{(2)}$, where $g,s\in\N$ and $g+s=m$. The somas are stationary and emit the repulsive cue. If they sense a sufficiently large density of the triggering agent, they also emit the attractive substance. The growth cones emit the triggering agent and, when they measure a sufficiently large density of the triggering cue, also the attractive substance. Moreover, they respond to repulsive cues for small times and to attractive cues at later times. The precise definitions are
\begin{align*}
	a_{1j}^{(1)}(c(X_{j}^{(1)}(t),t)) &= 15\arctan\big(2.25c_3(X_j^{(1)}(t)-3.5,t)\big)
  + \frac{15}{2}\pi + 15\arctan(2t), \\
	a_{1k}^{(2)}(c(X_{k}^{(2)}(t),t)) &= 5\arctan\big(0.5c_3(X_{k}^{(2)}(t),t)\big), \\
	a_{3j}^{(1)}(c(X_{j}^{(1)}(t),t)) &= 20\arctan\big(2.25c_1(X_{j}^{(1)}(t)-3,t)\big)
  + \frac{15}{2}\pi + 15\arctan(2t), \\
	a_{2j}^{(1)}(c(X_{j}^{(1)}(t),t)) &= a_{3k}^{(2)}(c(X_{k}^{(2)}(t),t)) = 0, \quad
	a_{2k}^{(2)}(c(X_{k}^{(2)}(t),t)) = 3, \\
  b_{j1}^{(1)}(c(X_{j}^{(1)}(t),t)) &= \frac{\beta}{\pi}\arctan\big(0.3(10t-13)\big)
  + \frac{\beta}{2}, \\
  b_{j2}^{(1)}(c(X_{j}^{(1)}(t),t)) &= \frac{\gamma}{\pi}\arctan\big(0.3(10t-13)\big)
  - \frac{\gamma}{2}, \\
  b_{j3}^{(1)}(c(X_{j}^{(1)}(t),t)) &= 0, \\
  b_{k1}^{(2)}(c(X_{k}^{(2)}(t),t))&= b_{k2}^{(2)}(c(X_{k}^{(2)}(t),t)) = b_{k2}^{(2)}(c(X_{k}^{(2)}(t),t)) = 0 \\
  \text{for} \quad j=1,\ldots,g&, \quad k = 1, \ldots, s.
\end{align*} 
The function $a_{1j}^{(1)}$ produces more of the attractive cue in the presence of the triggering agent and increases with time; see Figure \ref{fig.ab}. Similarly, $a_{3j}^{(1)}$ describes an increased production of the triggering agent in response to the attractive cue. The factors $\beta$ and $\gamma$ determine the strength of the attractive and repulsive cue, respectively. For small times, the values of $b_{j1}^{(1)}$ are very small. This means that initially the walkers move relatively freely and respond to the presence of other walker only after some time has passed. During the initial phase, the walkers are mostly guided by the repulsive cue, which is constantly produced by the soma ($a_{2j}^{(2)}=3$). Thus, the walkers move away from the soma before responding to the attractive cue.  

\begin{figure}[ht]
\includegraphics[width=65mm]{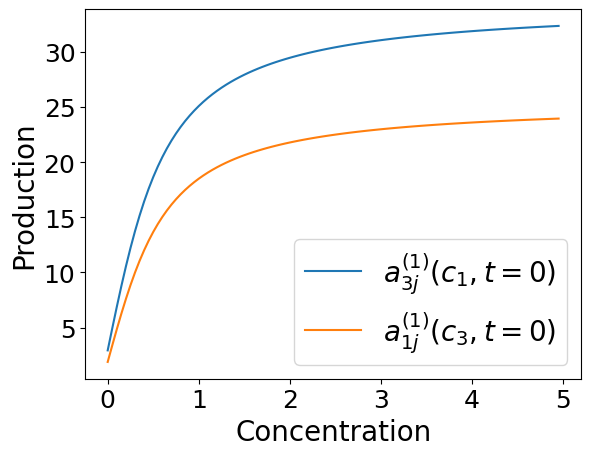}
\includegraphics[width=65mm]{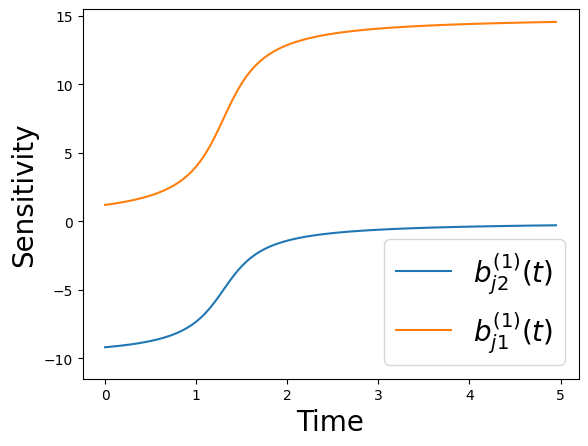}
\caption{Coefficient functions $a_{1j}^{(1)}$, $a_{3j}^{(1)}$ (left) and $b_{1j}^{(1)}$, $b_{2j}^{(1)}$ (right).}
\label{fig.ab}
\end{figure}

\subsection*{First experiment}
We place $s=9$ somas on a $3\times 3$ grid with randomly distributed spatial deviation from their grid positions (deviation $=0.3$) in the domain (red dots in Figure \ref{fig.9}). We have simulated $g=9$ neurites (left panel) and $g = 27$ neurites (right panel). In the latter case, three neurites originate from each soma. To prevent neurites originating from the same soma from connecting with each other, their initialization times are offset by $t=0.8$. In this first experiment, due to the large number of neurites, we have reduced the production of attractive cues and increased the production of repulsive cues according to 
\begin{align*}
  a_{1j}^{(1)}(c(X_{j}^{(1)}(t),t)) 
  &= 5\arctan\big(2.25c_3(X_j^{(1)}(t)-3.5,t)\big)
  + \frac{5}{2}\pi + 5\arctan(2t), \\
  a_{2k}^{(2)}(c(X_{k}^{(2)}(t),t)) &= 5, 
  \quad j = 1,\ldots,g,\ k = 1, \ldots ,s.
\end{align*}
The growth cones move according to the discrete equations \eqref{3.Xa}--\eqref{3.qa}, yielding the black curves in Figure \ref{fig.9}. We have chosen $\sigma:=\sigma_j=0.2$ for $j=1,\ldots,m$, $\beta=15$, $\gamma=10$, $\eps=0.01$, and $T=5$. Whenever two growth cones approach, their movement is stopped. The chosen parameters enable the additional neurites to extend in new directions and form extra connections.

\begin{figure}[ht]
\includegraphics[width=70mm]{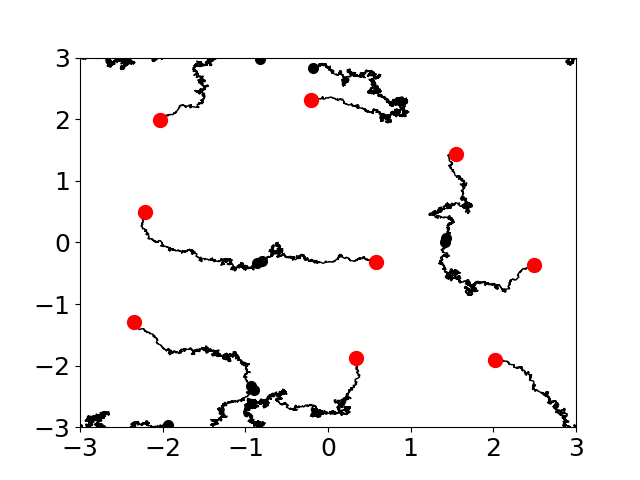}
\includegraphics[width=70mm]{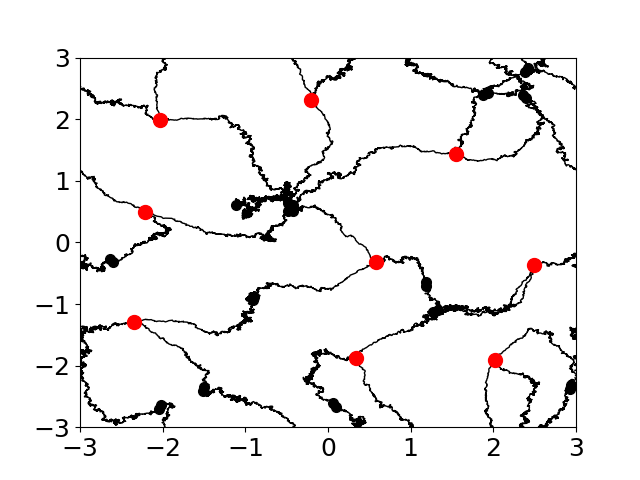}
\caption{Simulation of 9 somas and 9 neurites (left) and 27 neurites (right). The somas are distributed on a grid with random variation in their positions.}
\label{fig.9}
\end{figure}

\subsection*{Second experiment: variation of $\sigma$}

Next, we simulate six somas and six neurites for two different values of $\sigma$; see Figure \ref{fig.sigma}. The parameters are $\eps=0.1$, $\beta=15$, $\gamma=10$, and $T=5$. The somas are placed randomly and both simulations use the same realizations of the Brownian motions. When $\sigma$ is small, the motion of the growth cones is mainly governed by the drift, while for large values of the diffusion coefficient $\sigma$, the influence of the stochastic term increases and the paths of the neurite show random fluctuations. 

\begin{figure}[ht]
\includegraphics[width=70mm]{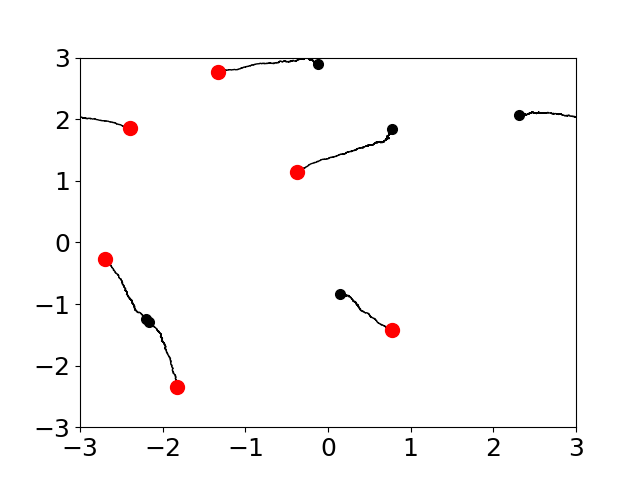}
\includegraphics[width=70mm]{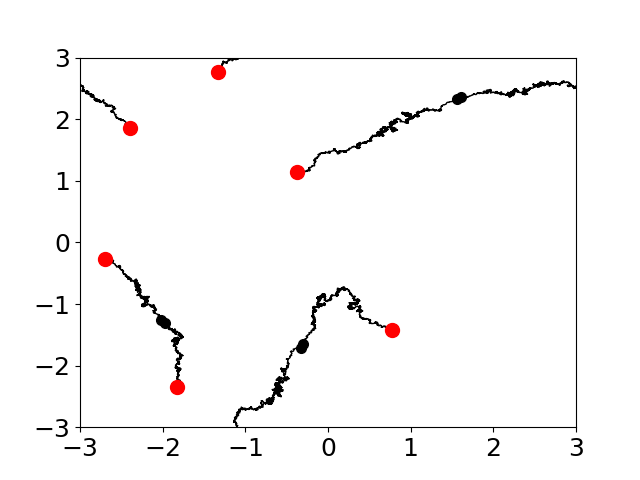}
\caption{Simulation of six neurites and six somas with diffusion coefficient $\sigma=0.05$ (left) and $\sigma=0.2$ (right).}
\label{fig.sigma}
\end{figure}

\subsection*{Third simulation: variation of $\eps$}

We consider various values of the parameter $\eps$, used in the Gaussian mollifier and shown in Figure \ref{fig.eps}. The production of the signaling agents becomes very localized for small values of $\eps$. As a result the signals spread in a small neighborhood of the tip position, and the neurite movement stalls. For larger values of $\eps$, the neurites are able to connect. We discuss the role of $\eps$ in Section \ref{sec.eps} in more detail. 

\begin{figure}[ht]
\includegraphics[width=53mm,height=50mm]{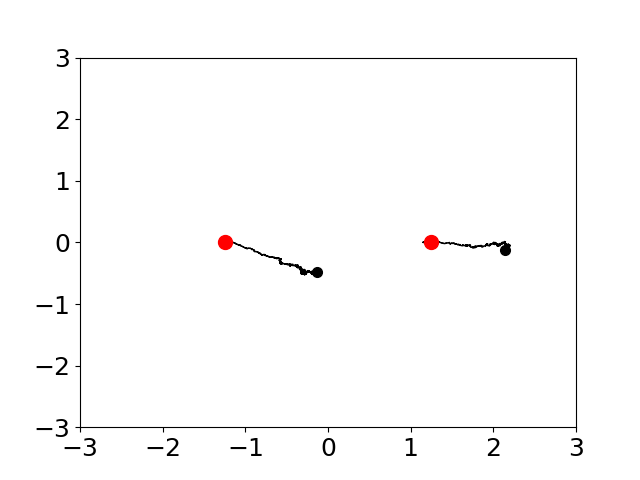}
\includegraphics[width=53mm,height=50mm]{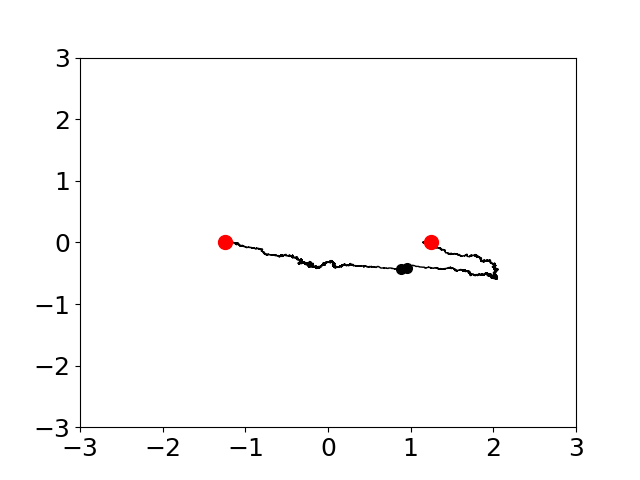}
\includegraphics[width=53mm,height=50mm]{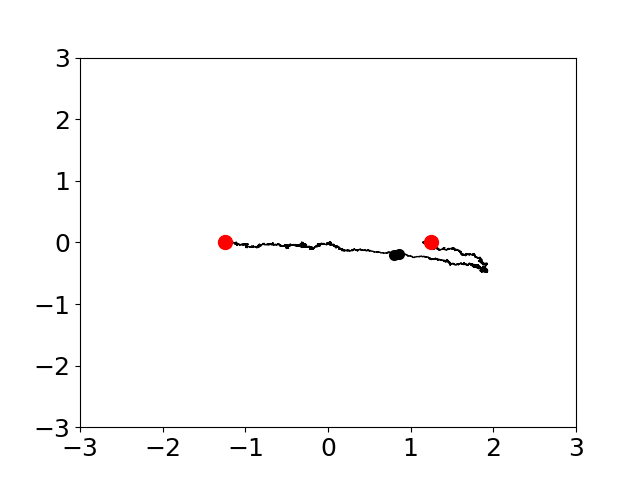}
\caption{ Simulation of two neurites and two somas using $\sigma=0.1$ and $\eps=0.005$ (left), $\eps=0.02$ (middle), and $\eps=0.04$ (right).}
\label{fig.eps}
\end{figure}

\subsection*{Fourth simulation: variation of $\beta$ and $\gamma$}

We study the influence of the attractive strength $\beta$ and repulsive strength $\gamma$ in Figure \ref{fig.beta}. Larger attraction has the effect that the neurites stay closer together and leads to a reduced mobility, while growth cones travel larger distances when the repulsion force increases. 

\begin{figure}[ht]
\includegraphics[width=65mm]{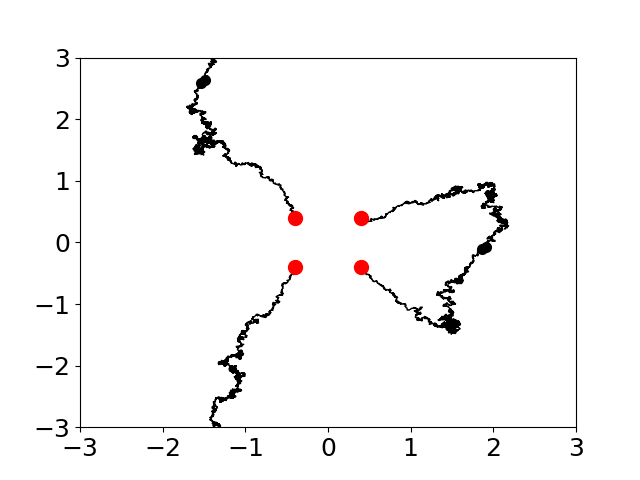}
\includegraphics[width=65mm]{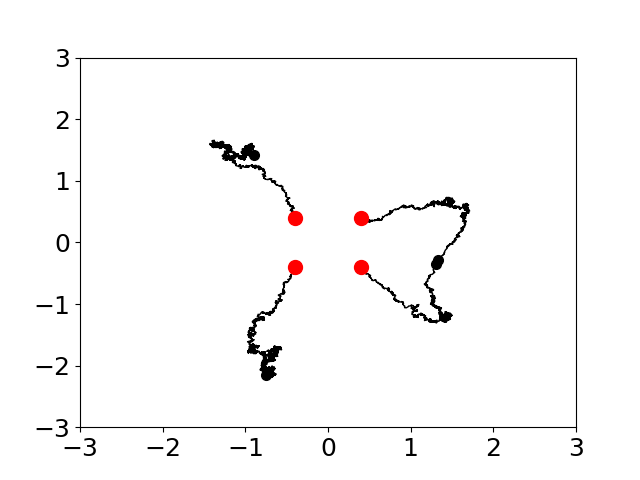}
\includegraphics[width=65mm]{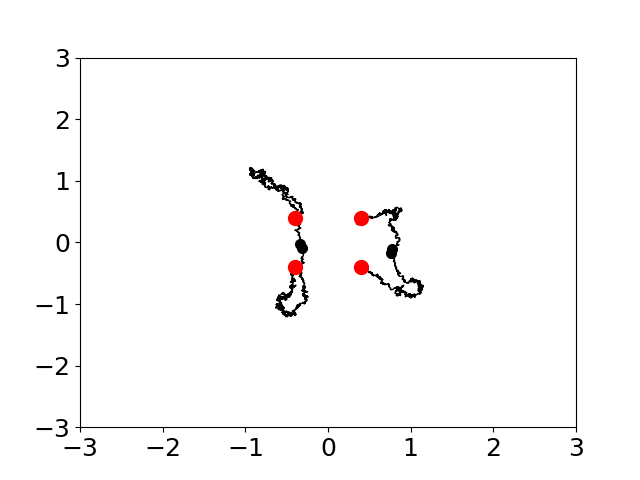}
\includegraphics[width=65mm]{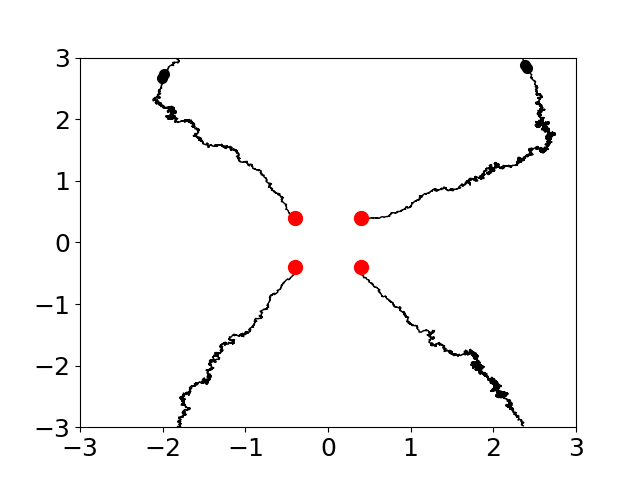}
\caption{Four neurites and four somas using $\sigma=0.2$, $\eps=0.01$, $T=5$, and $(\alpha,\beta)=(5,10)$ (top left), $(\alpha,\beta)=(20,10)$ (top right), $(\alpha,\beta)=(15,5)$ (bottom left), $(\alpha,\beta)=(15,20)$ (bottom right).}
\label{fig.beta}
\end{figure}

%%%%%%%%%%%%%%%%%%%%%%%%%%%%%%%%%%%%%%%%%%%%%%%%%%%%%%%%%%%%%%%%%%%

\section{Discussion of the limit $\eps\to 0$}\label{sec.eps}

In the previous section, we have used the mollifier
\begin{align*}
  \eta_\eps(x) = \frac{1}{(4\pi\eps)^{d/2}}
  \exp\bigg(-\frac{|x|^2}{4\eps}\bigg), \quad x\in\R^d.
\end{align*}
We observed that the movement of the growth cones slows down when $\eps$ becomes smaller. In this section, we discuss this behavior for a simplified deterministic system. For a given force in the differential equation for the position $X(t)$, we show that the velocity $\pa_t X$ converges to zero as $\eps\to 0$. This result means that a mollifier on the right-hand side of equation \eqref{1.c} for $c_i$ is necessary to obtain nontrivial dynamics and should not be replaced by the Delta distribution. 

Let $F=(u,0)\in\R^2$ be some constant force with $u>0$, let $a>0$, and consider the deterministic system
\begin{align}\label{4.eq}
  \pa_t X_\eps = \na c_\eps(X_\eps,t) + F, \quad 
  \pa_t c_\eps - \Delta c_\eps = a\eta_\eps(x-X_\eps(t))
  \quad\mbox{in }\R^d,\ t>0
\end{align}
with the initial data $X_\eps(0)=(X_\eps^{(1)}(0),X_\eps^{(2)}(0))=(x^0,y^0)$ with $y^0\in\R$ and $c_\eps(0)=c^0$. We claim that the dynamics of $X_\eps$ is purely one-dimensional. First, we compute the gradient $\na c_\eps$.

\begin{lemma}\label{lem.nac}
The gradient of the chemical signal is given by
\begin{align*}
  \na c_\eps(X(t),t) &= -\frac{a}{8\pi}\int_\eps^{t+\eps}
  \frac{1}{\tau^2}\big(X_\eps(t)-X_\eps(t+\eps-\tau)\big) \\
  &\times\exp\bigg(-\frac{|X_\eps(t)-X_\eps(t+\eps-\tau)|^2}{4\tau}
  \bigg)\dd \tau,
\end{align*}
\end{lemma}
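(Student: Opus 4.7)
The plan is to exploit the fortunate fact that the mollifier $\eta_\eps$ is itself a heat kernel. Indeed, writing $\Phi(x,\tau) := (4\pi\tau)^{-d/2}\exp(-|x|^2/(4\tau))$ for the free heat kernel (i.e.\ $\Phi_{D,\lambda}$ from \eqref{2.Phi} with $D=1$, $\lambda=0$), we have the identity $\eta_\eps(x) = \Phi(x,\eps)$. This reduces the formula to a routine gradient of a Gaussian, provided one first simplifies the Duhamel representation by the semigroup property.

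First I would write the Duhamel formula for $c_\eps$ (as in the linear lemma proved earlier in Section \ref{sec.ex}, now with $D=1$, $\lambda=0$):
\begin{align*}
  c_\eps(x,t) = a\int_0^t\int_{\R^d}\eta_\eps(y-X_\eps(s))\Phi(x-y,t-s)\,\dd y\,\dd s
  + \int_{\R^d}c^0(x-y)\Phi(y,t)\,\dd y.
\end{align*}
For the source term, I would recognize the spatial integral as the convolution $\Phi(\cdot,\eps)*\Phi(\cdot,t-s)$ evaluated at $x-X_\eps(s)$, and invoke the semigroup identity $\Phi(\cdot,\eps)*\Phi(\cdot,t-s) = \Phi(\cdot,t-s+\eps)$ to collapse the inner integral. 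This yields
\begin{align*}
  c_\eps(x,t) = a\int_0^t \Phi(x-X_\eps(s),t-s+\eps)\,\dd s
  + \int_{\R^d}c^0(x-y)\Phi(y,t)\,\dd y.
\end{align*}
(The initial-datum contribution drops out of the claimed formula; this is the implicit normalization in the statement, obtained for instance by taking $c^0\equiv 0$, or by absorbing the harmonic part into the constant of integration.)

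Next I would differentiate in $x$ under the integral. Using $\nabla_x\Phi(z,\tau) = -(z/(2\tau))\Phi(z,\tau)$ and specializing to $d=2$ (the setting of Section \ref{sec.eps} and \eqref{4.eq}), the prefactor $(4\pi\tau)^{-1}\cdot(2\tau)^{-1} = (8\pi\tau^2)^{-1}$ emerges. Evaluating at $x=X_\eps(t)$ gives
\begin{align*}
  \na c_\eps(X_\eps(t),t) = -\frac{a}{8\pi}\int_0^t \frac{X_\eps(t)-X_\eps(s)}{(t-s+\eps)^2}
  \exp\!\bigg(-\frac{|X_\eps(t)-X_\eps(s)|^2}{4(t-s+\eps)}\bigg)\dd s.
\end{align*}
Finally, the substitution $\tau = t-s+\eps$, mapping $s\in(0,t)$ bijectively onto $\tau\in(\eps,t+\eps)$ with $X_\eps(s) = X_\eps(t+\eps-\tau)$ and $\dd\tau = -\dd s$, produces exactly the claimed expression.

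I do not anticipate a genuine obstacle: the only substantive step is recognizing $\eta_\eps = \Phi(\cdot,\eps)$ and applying the semigroup identity, after which the computation is a change of variables. Care is only needed to justify differentiation under the integral, which is standard since $\eps>0$ keeps the integrand smooth and uniformly bounded together with its $x$-derivative on compact time intervals.
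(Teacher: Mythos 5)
Your proposal is correct and follows essentially the same route as the paper's proof: identify $\eta_\eps(x)=\Phi_{1,0}(x,\eps)$, collapse the Duhamel convolution via the semigroup property of the heat kernel, differentiate the resulting Gaussian, and substitute $\tau=t-s+\eps$. Your remark about the dropped initial-datum term (the paper tacitly takes $c^0\equiv 0$ in this computation) is a fair observation but does not change the argument.
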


Thus, $X_\eps^{(2)}(t)=y^0$ is a constant solution since $\pa c_\eps/\pa x_2=0$, and only the first component of $X_\eps$ changes over time.

\begin{proof}[Proof of Lemma \ref{lem.nac}]
With the heat kernel $\Phi_{1,0}$ (see \eqref{2.Phi}) and the mollifier $\eta_\eps(x)=\Phi_{1,0}(x,\eps)$, the solution $c_\eps$ to the second equation in \eqref{4.eq} reads as 
\begin{align*}
  c_\eps(x,t) &= a\int_0^t\int_{\R^d}\Phi_{1,0}(y-X_\eps(s),\eps)
  \Phi_{1,0}(x-y,t-s)\dd y\dd s \\
  &= a\int_0^t\int_{\R^d}\Phi_{1,0}(z,\eps)
  \Phi_{1,0}(x-X_\eps(s)-z,t-s)\dd y\dd s \\
  &= a\int_0^t\big(\Phi_{1,0}(\cdot,\eps)*\Phi_{1,0}(\cdot,t-s)\big)
  (x-X_\eps(s))\dd s \\
  &= a\int_0^t\Phi_{1,0}(x-X_\eps(s),t-s+\eps)\dd s,
\end{align*}
where the last step follows from the semigroup property (additivity) of the heat kernel. The gradient becomes
\begin{align*}
  \na c_\eps(x,t) = -a\int_0^t\frac{1}{4\pi(t-s+\eps)}
  \frac{x-X(s)}{2(t-s+\eps)}
  \exp\bigg(-\frac{|x-X_\eps(s)|^2}{4(t-s+\eps)}\bigg)\dd s.
\end{align*}
The result follows after transforming $\tau=t-s+\eps$.
\end{proof}

Numerical simulations show that the speed $\pa_t X_\eps^{(1)}$ of the walker decreases with increasing time and seems to approach some asymptotic speed; see Figure \ref{fig.vel}. The simulations are performed with $\Delta t=10^{-3}$, $\Delta x=10^{-2}$, and $a=1$. This behavior motivates the assumptions imposed in the following proposition.

\begin{figure}[ht]
\includegraphics[width=70mm]{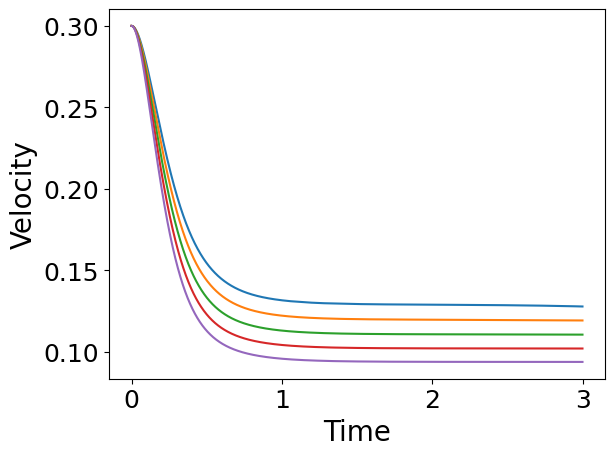}
\caption{Speed $\pa_t X_\eps(t)$ of a neurite for $\eps=0.05,0.047,\ldots,0.038$ (from top to bottom).}
\label{fig.vel}
\end{figure}

\begin{proposition}
Let $\eps>0$, $d=2$, and $(X_\eps,c_\eps)$ be the solution to \eqref{4.eq}. We assume that {\rm (i)} $t\mapsto\pa_t X_\eps^{(1)}(t)$ is nonincreasing for sufficiently large $t>0$, {\rm (ii)} there exists $v_\eps>0$ such that $\pa_t X_\eps^{(1)}(t)\to (v_\eps,0)$ as $t\to\infty$ and {\rm (iii)} $v_\eps$ is bounded for all $\eps\in(0,1)$. Then $v_\eps\to 0$ as $\eps\to 0$.
\end{proposition}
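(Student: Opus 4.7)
By Lemma \ref{lem.nac} and the observation that $\pa c_\eps/\pa x_2\equiv 0$ (so that the second component of $X_\eps$ stays at $y^0$), the ODE for the first component, after substituting $s=\tau-\eps$ in the representation formula, reads
\begin{align*}
  \pa_t X_\eps^{(1)}(t) = u - \frac{a}{8\pi}\int_0^t \frac{X_\eps^{(1)}(t) - X_\eps^{(1)}(t-s)}{(s+\eps)^2}
  \exp\bigg(-\frac{(X_\eps^{(1)}(t) - X_\eps^{(1)}(t-s))^2}{4(s+\eps)}\bigg)\dd s.
\end{align*}
The plan is to send $t\to\infty$ to obtain an implicit equation for $v_\eps$, and then to show that this implicit equation forces $v_\eps\to 0$ as $\eps\to 0$.

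For the first step, by assumption (i) there exists $t_0>0$ such that $r\mapsto\pa_r X_\eps^{(1)}(r)$ is nonincreasing on $[t_0,\infty)$, and by (ii) its limit equals $v_\eps>0$. Hence $\pa_r X_\eps^{(1)}(r)\in[v_\eps,M_\eps]$ on $[t_0,\infty)$ with $M_\eps:=\pa_t X_\eps^{(1)}(t_0)$, and for every fixed $s>0$ one has $X_\eps^{(1)}(t)-X_\eps^{(1)}(t-s)\to v_\eps s$ as $t\to\infty$. For $s\le t-t_0$ the displacement is sandwiched between $v_\eps s$ and $M_\eps s$, so the integrand is dominated by $(M_\eps s/(s+\eps)^2)\exp(-v_\eps^2 s^2/(4(s+\eps)))$, which is integrable on $(0,\infty)$ thanks to the Gaussian decay. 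The transient contribution from $s\in(t-t_0,t]$ vanishes as $t\to\infty$ because $X_\eps^{(1)}(t)\to\infty$ and the exponential then suppresses the integrand uniformly on this vanishing range. Dominated convergence yields
\begin{align*}
  u = v_\eps\bigg(1 + \frac{a}{8\pi}\,I(\eps)\bigg), \quad I(\eps):=\int_0^\infty \frac{s}{(s+\eps)^2}\exp\bigg(-\frac{v_\eps^2 s^2}{4(s+\eps)}\bigg)\dd s.
\end{align*}

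For the second step, I would show $I(\eps)\to\infty$ as $\eps\to 0$. By assumption (iii) there is $V_0>0$ with $v_\eps\le V_0$ for all $\eps\in(0,1)$. Using $v_\eps^2 s^2/(4(s+\eps))\le V_0^2 s/4$ on $(0,\infty)$ and the elementary estimate $s/(s+\eps)^2\ge 1/(4s)$ valid for $s\ge\eps$ (since $s+\eps\le 2s$), one obtains
\begin{align*}
  I(\eps) \ge \int_\eps^1 \frac{1}{4s}\exp\bigg(-\frac{V_0^2 s}{4}\bigg)\dd s \ge \frac{e^{-V_0^2/4}}{4}\log(1/\eps)\to\infty.
\end{align*}
Combined with the implicit equation, this gives $v_\eps=u/(1+(a/(8\pi))I(\eps))\to 0$, as claimed.

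The main obstacle I expect is justifying the limit $t\to\infty$ rigorously in the first step: one must split the integration into the asymptotic regime $[0,t-t_0]$, where monotonicity provides the two-sided linear bounds on the displacement needed for a single integrable majorant, and the transient regime $(t-t_0,t]$, where one relies on the divergence $X_\eps^{(1)}(t)\to\infty$ to kill the integrand via the Gaussian factor. By contrast, the singularity of $1/(s+\eps)^2$ at $s=0$ plays no role in this step since $\eps>0$ is fixed; it only drives the conclusion in the second step, through the logarithmic blow-up of $I(\eps)$.
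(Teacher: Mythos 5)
Your proposal is correct and follows essentially the same route as the paper: pass to the limit $t\to\infty$ using the monotonicity from (i) to control the displacement and apply dominated convergence, arrive at the implicit relation $v_\eps = u\{1+\tfrac{a}{8\pi}I(\eps)\}^{-1}$ (the paper keeps the variable $\tau=s+\eps$, a trivial reparametrization), and conclude from the divergence of $I(\eps)$ as $\eps\to0$. If anything, your write-up is slightly more careful than the paper's, supplying an explicit $t$-independent integrable majorant $M_\eps s(s+\eps)^{-2}e^{-v_\eps^2 s^2/(4(s+\eps))}$ and the quantitative lower bound $I(\eps)\ge \tfrac{1}{4}e^{-V_0^2/4}\log(1/\eps)$ where the paper only remarks that the integrand behaves like $1/\tau$.
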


The proposition shows that equations \eqref{4.eq} becomes stationary in the limit $\eps\to 0$.

\begin{proof}
Assumption (ii) and the mean-value theorem show that for $\tau>\eps$,
\begin{align}\label{4.X}
  X_\eps^{(1)}(t) - X_\eps^{(1)}(t+\eps-\tau) 
  = \pa_t X_\eps^{(1)}(\xi_t)(\tau-\eps)
  \to v_\eps(\tau-\eps) \quad\mbox{as }t\to\infty,
\end{align}
where $\xi_t$ is a number between $t$ and $t+\eps-\tau$. We deduce from assumptions (i) and (ii) that
\begin{align*}
  X_\eps^{(1)}(t)-X_\eps^{(1)}(t+\eps-\tau) \ge v_\eps(\tau-\eps).
\end{align*}
This shows that
\begin{align*}
  |X_\eps^{(1)}(t)&-X_\eps^{(1)}(t+\eps-\tau)|\exp
  \bigg(-\frac{|X_\eps^{(1)}(t)-X_\eps^{(1)}(+\eps-\tau)|^2}{4\tau}
  \bigg) \\
  &\le |X_\eps^{(1)}(t)-X_\eps^{(1)}(0)|
  \exp\bigg(-\frac{v_\eps^2(\tau-\eps)^2}{4\tau}\bigg).
\end{align*}
We infer from \eqref{4.X} that $|X_\eps^{(1)}(t)-X_\eps^{(1)}(0)|$ grows at most linearly, which allows us to apply the dominated convergence theorem to conclude that in the limit $t\to\infty$,
\begin{align*}
  \na c_\eps(x,t) \to -\frac{a}{8\pi}\int_\eps^{t+\eps}\frac{1}{\tau^2}
  v_\eps(\tau-\eps)\exp\bigg(-\frac{v_\eps^2(\tau-\eps)^2}{4\tau}
  \bigg)\dd\tau.
\end{align*}
In the limit $t\to\infty$, the first component of the first equation of \eqref{4.eq} becomes
\begin{align*}
  v_\eps = -\frac{a}{8\pi}\int_\eps^{\infty}\frac{1}{\tau^2}
  v_\eps(\tau-\eps)\exp\bigg(-\frac{v_\eps^2(\tau-\eps)^2}{4\tau}
  \bigg)\dd\tau + u
\end{align*}
or equivalently, 
\begin{align*}
  v_\eps = u\bigg\{1 + \frac{a}{8\pi}\int_\eps^\infty
  \frac{\tau-\eps}{\tau^2}\exp\bigg(-\frac{v_\eps^2(\tau-\eps)^2}{4\tau}
  \bigg)\dd\tau
  \bigg\}^{-1}.
\end{align*}
By Assumption (iii), $v_\eps$ remains bounded for $\eps\to 0$. Moreover, the integrand behaves like $1/\tau$, which means that the integral diverges to infinity. Therefore, $v_\eps\to 0$ as $\eps\to 0$, finishing the proof.
\end{proof}

\section*{Conflict of interest statement}

There is no conflict of interest.

%%%%%%%%%%%%%%%%%%%%%%%%%%%%%%%%%%%%%%%%%%%%%%%%%%%%%%%%%%%%%%%%%%%

\end{document}